\documentclass{amsart}

\usepackage[dvips]{color}
\usepackage{amsfonts}
\usepackage{multicol}
\usepackage{graphicx}
\usepackage{amsmath}
\usepackage{amssymb}
\usepackage{amsthm}
\usepackage{mathrsfs}
\newtheorem{theorem}{Theorem}[section]
\newtheorem{lemma}[theorem]{Lemma}

\newtheorem{corollary}[theorem]{Corollary}
\theoremstyle{definition}

\theoremstyle{remark}
\newtheorem{remark}[theorem]{Remark}

\numberwithin{equation}{section}

%    Absolute value notation

%    Blank box placeholder for figures (to avoid requiring any
%    particular graphics capabilities for printing this document).

\newfam\msbfam
\font\tenmsb=msbm10  \textfont\msbfam=\tenmsb
\font\sevenmsb=msbm7  \scriptfont\msbfam=\sevenmsb
\font\fivemsb=msbm5    \scriptscriptfont\msbfam=\fivemsb
\def\Bbb{\fam\msbfam \tenmsb}

\def\CC{{\Bbb C}}

\newfam\msbbfam
\font\tenmsbb=msbm10  scaled \magstep1 \textfont\msbbfam=\tenmsbb
\font\sevenmsbb=msbm7  scaled \magstep1 \scriptfont\msbbfam=\sevenmsbb
\font\fivemsbb=msbm5    scaled \magstep1 \scriptscriptfont\msbbfam=\fivemsbb
\def\BBbb{\fam\msbbfam \tenmsbb}

\def\CCC{{\BBbb C}}

\begin{document}

\title{Approach regions for domains in $\CCC^2$ of finite type}

%    Information for first author
\author{Baili Min}
%    Address of record for the research reported here
\address{Department of Mathematics, Washington University in St.\,Louis, Saint Louis, MO 63130}
\email{minbaili@math.wustl.edu}
%    \thanks will become a 1st page footnote.
%\thanks{The first author was supported in part by NSF Grant \#000000.}

%    Information for second author

%    General info
\subjclass{32A40}

\date{2010}

\keywords{Several complex variables, finite type, approach region}

\begin{abstract} Recall the Fatou theorem for the unit disc in $\CC$.
Consider a domain in $\mathbb{C}^2$ of finite type. In this paper we will show that the approach regions studied by Nagel, Stein, Wainger and Neff are the best possible ones for the boundary behavior of bounded analytic functions, and there is no Fatou theorem for complex tangentially broader approach regions.

\end{abstract}

\maketitle

\section{Background}
The purpose of this paper is to study one kind of approach region for domains in $\mathbb{C}^2$ of finite type.
Have a look at the space $\CC$.

\subsection{The problem}
If we consider the unit disc in $\mathbb{C}$, the classical theorem of Fatou states that, for $f \in H^p$, the nontangential limit exists for almost every boundary point. In the quest for generalization to the case of several complex variables,  Kor\'{a}nyi discovered the ``admissible'' approach region which allows parabolical approach from certain directions, in the case of balls and bounded functions (see \cite{K}). This phenomenon was generalized by Stein, who defined the admissible approach region (see \cite{S}) for holomorphic functions in $H^p$ in strongly pseudoconvex domains in $\mathbb{C}^n$. It was shown by Hakim and Sibony that, on the unit ball in $\mathbb{C}^n$, this is the best possible approach region (see \cite{HS}).  For meromorphic functions, but in the Nevanlinna class, Lempert defined another approach region for pseudoconvex domains in $\mathbb{C}^n$ (see \cite{Lempert}).

Kohn introduced the concept of finite type when he studied the $\overline{\partial}$ problem (see \cite{Kohn}), which eventually had great impact on the geometry of hypersurfaces in $\mathbb{C}^n$. Among all the work on the finite type conditions, we should mention that by Kohn, Bloom/Graham Catlin and D'Angelo: they studied the matter in terms of ideals and iterated commutators (see \cite{Kohn}, \cite{BG}, \cite{C},  \cite{D1} and \cite{D2}). 

We now wish to know what approach regions will be like if the domain is of finite type, which means it may not be strongly pseudoconvex. Nagel, Stein and Wainger defined  the admissible approach region (see \cite{NSW}) for holomorphic functions and then, in his Ph.\,D.\;dissertation (see \cite{Neff}) Neff showed that the approach region would also work for meromorphic Nevanlinna functions.

So far, however, it is not known whether these approach regions for the finite type case are the best possible. In this paper we will study the approach region of Nagel-Stein-Wainger-Neff type in $\mathbb{C}^2$. The main results are Theorem~\ref{main} and Corollary~\ref{main2} which assure us that, for other regions  broader only in the tangential direction, we can construct a bounded holomorphic function that does not have a limit at the base-points, and consequently there is no Fatou theorem for those broader approach regions: these base-points form a set of positive measure on the boundary. For the rest of this paper, everything is carried out in $\mathbb{C}^2$.

To start with, we will approach the concept of finite type by iterated commutators. 

\subsection{A study of iterated commutators}

Let $\Omega$ be a smoothly bounded domain in $\mathbb{C}^2$ with a defining function $\rho$. We assume that $\Omega$ is of finite type. Suppose $\omega^0=(\omega_1^0, \omega_2^0) \in \partial \Omega$. Then, in a small neighborhood $V=V_{\omega^0}$ of $\omega^0$, the complex holomorphic tangential vector field has a basis $L+i\overline{L}$, where
\begin{equation}
L=-\frac{\partial \rho}{\partial z_2}\frac{\partial}{\partial z_1}+\frac{\partial \rho}{\partial z_1}\frac{\partial}{\partial z_2}
\label{eqL},
\end{equation}
and
\begin{equation}
\overline{L}=-\frac{\partial \rho}{\partial \overline{z}_2}\frac{\partial}{\partial \overline{z}_1}+\frac{\partial \rho}{\partial \overline{z}_1}\frac{\partial}{\partial \overline{z}_2}
\label{eqLbar}.
\end{equation}

Then we can find a transverse vector field $T$ such that $L, \overline{L}$ and $T$ span the tangent space to $\partial \Omega$ at any point in $V$:
\begin{equation}
T=\frac{\partial \rho}{\partial \overline{z}_1}\frac{\partial}{\partial z_1}-\frac{\partial \rho}{\partial z_1}\frac{\partial}{\partial \overline{z}_1}
\label{eqT}.
\end{equation}

Suppose that $\mathscr{L}_{k-1}$ is an iterated commutator of degree $k-1$, 
\begin{equation}
\mathscr{L}_{k-1}=f_1L+f_2\overline{L}+\lambda_{k-1}T,
\label{eqLk-1}
\end{equation}
or simply $\mathscr{L}_{k-1} \equiv \lambda_{k-1}T \ \text{mod}(L, \overline{L})$.

If $\mathscr{L}_k=[L,\mathscr{L}_{k-1}]$, we can compute that $\mathscr{L}_k \equiv \lambda_k T \ \text{mod}(L, \overline{L})$ where $\lambda_k$ can be expressed explicitly:
\begin{align}
\lambda_k&=\frac{\partial \lambda_{k-1}}{\partial z_2}\frac{\partial \rho}{\partial z_1} - \frac{\partial \lambda_{k-1}}{\partial z_1}\frac{\partial \rho}{\partial z_2}+ \lambda_{k-1}\frac{\partial^2 \rho}{\partial z_1 \partial z_2 }- \lambda_{k-1}\frac{\frac{\partial \rho}{\partial z_2}\frac{\partial^2 \rho}{\partial z_1^2}}{\frac{\partial \rho}{\partial z_1}} \nonumber \\
&\qquad+\frac{f_2}{\frac{\partial \rho}{\partial z_1}\frac{\partial \rho}{\partial \overline{z}_1}}\Big(\frac{\partial^2 \rho}{\partial z_1 \partial \overline{z}_1}\frac{\partial \rho}{\partial z_2}\frac{\partial \rho}{\partial \overline{z}_2}+ 
\frac{\partial^2 \rho}{\partial z_2 \partial \overline{z}_2}\frac{\partial \rho}{\partial z_1}\frac{\partial \rho}{\partial \overline{z}_1} \nonumber \\
&\qquad \qquad \qquad \qquad-\frac{\partial^2 \rho}{\partial z_1 \partial \overline{z}_2}\frac{\partial \rho}{\partial z_2}\frac{\partial \rho}{\partial \overline{z}_1}-
\frac{\partial^2 \rho}{\partial z_2 \partial \overline{z}_1}\frac{\partial \rho}{\partial z_1}\frac{\partial \rho}{\partial \overline{z}_2}
        \Big).
\label{eqlambda}
\end{align}

We can get similar results for $[\overline{L},\mathscr{L}_{k-1}]$. This computation shows that, for any iterated commutator of degree $k$, only $\frac{\partial \rho}{\partial z_1}$ and/or $\frac{\partial \rho}{\partial \overline{z}_1}$ appear in the denominator of the coefficient function of the complex normal vector $T$. So we choose coordinates so that
the $z_1$ derivatives of $\rho$ do not vanish.

Let $\mathscr{M}_k$ be the collection of all these linearly independent iterated commutators with degree less or equal to $k$. Suppose $\mathscr{L} \in \mathscr{M}_k$, and that $\lambda_{\mathscr{L}}$ is the coefficient function of $T$ in the sense that $\mathscr{L} \equiv \lambda_{\mathscr{L}}T \ \text{mod}(L, \overline{L})$. Then we can define $\Lambda_k(z)$ by:
\begin{equation}
\Lambda_k(z)=\sqrt{\sum_{\mathscr{L} \in \mathscr{M}_k}{\lambda_{\mathscr{L}}^2(z)}},
\end{equation}
a key function for defining the approach regions.

\begin{remark}
If $\Lambda_{k-1}(z^0) \neq 0$, then  $\Lambda_k(z^0) \neq 0$. 
Actually, the smallest $\tau$ such that $\Lambda_\tau(z^0) \neq 0$ is called the {\it type} of $z^0$. See \cite{D2} and \cite{SK}.
\end{remark}

\begin{remark}
Note that we always have $\nabla \rho \neq 0$, since $\rho$ is a defining function. With the assumption that $\frac{\partial \rho}{\partial z_1}(z^0) \neq 0$ and $\frac{\partial \rho}{\partial \overline{z}_1}(z^0) \neq 0$, all $\Lambda_k(z^0) < \infty$, $k \geqslant 2$.
\end{remark}

\section{Approach Regions}

\subsection{Definitions}
Let $\Omega$ be a domain of finite type in $\mathbb{C}^2$ such that, for all $z \in \Omega$, it is true that $|z| \leqslant 1$. Suppose that $(1,0)$ is on the boundary,  that $\frac{\partial \rho}{\partial z_1}$ and $\frac{\partial \rho}{\partial \overline{z}_1}$ do not vanish at $(1,0)$, and the vector $\langle 1,0 \rangle$ is also a outward normal vector to the boundary at $(1,0)$. Let $U \subset \partial \Omega$ be a neighborhood of $(1,0)$ small enough that for any $w=(w_1, w_2) \in U$, the vector $(1,0)$ is transversal to $U$ at $w$. 

Let $\tau_z$ be the type of the point $z$ if $z \in \partial \Omega$, or the type of $\pi(z)$,the Euclidean normal projection of $z$ on the boundary. We also denote the ordinary Euclidean distance of $z$ to $\partial \Omega$  by $\delta(z)=\big|z-\pi(z)\big|$. Let $\tau=\max_{z \in \partial \Omega}\tau_z$. Since we assume that $\Omega$ is of finite type, we must have $\tau < \infty$. We denote $\tilde{\tau}$ the type of $(1,0)$. Then of course $\tilde{\tau} \leqslant \tau$.

Define $D(z)$:
\begin{equation}
D(z)=\inf_{2 \leqslant k \leqslant \tau}\Big(\frac{\delta(z)}{\Lambda_k\big(\pi(z)\big)}\Big)^{1/k}.
\label{equD}
\end{equation}

Define the ball $\beta_2$ such that, for $\omega^0 \in \partial \Omega$ and $r>0$, $\omega \in \beta_2(\omega^0, r)$ if and only if $\omega \in \partial \Omega$ and 
\begin{equation}
\left\{
\begin{array}{lr}
|\omega-\omega^0|< r,&\\
\Big|R\big(\omega,\omega^0\big)\Big|<\Lambda^{r}(\omega^0),& \\
\end{array} \right.
\label{ball}
\end{equation}
where we use this notation:
\begin{equation}
\Lambda^{\theta}(\zeta)=\sum_{k=2}^{\tau}{\theta^k\Lambda_k(\zeta)},
\label{equLambda}
\end{equation}
and where $R$ is a polarization of $\rho$, that is, $R(z, w)$ is a $C^\infty$ complex-valued function satisfying the following requirements:
\begin{align}
 &R(z,z)=\rho(z),\\
 &\overline{\partial}_zR(z,w) \text{ vanishes to infinite order on } z=w,\\
 &R(z,w)-\overline{R(w,z)}\text{ vanishes to infinite order on } z=w.
\end{align}
For example, if $\rho(z)=z_1 \overline{z}_1+z_2^2 \overline{z}_1^2-1$ is a defining function for a domain in $\CC^2$, then we can choose one polarization $R(z, w)=z_1 \overline{w}_1+z_2^2 \overline{w}_1^2-1$.

With these notations, the approach region of Nagel-Stein-Wainger-Neff type is
\begin{equation}
\mathscr{A}_{\alpha}(1,0)=\big\{ z\in \Omega \cap V: \pi(z) \in \beta_2\big((1,0), \alpha D(z) \big)\big\},
\label{ardef}
\end{equation}
where $\alpha > 0$. These definitions can be found in \cite{NSW} and \cite{Neff}.

We see that the definition of the approach region above is equivalent to
\begin{equation}
\left\{
\begin{array}{lr}
|\pi(z)-(1,0)|< \alpha D(z),&\\
\Big|R\big(\pi(z),(1,0)\big)\Big|<\Lambda^{\alpha D(z)}(1,0).& \\
\end{array} \right.
\label{arequiv}
\end{equation}

\begin{lemma}
If $\big|\pi(z)-(1,0)\big| \sim D(z)$, then $|z-(1,0)| \sim  D(z)$.
\end{lemma}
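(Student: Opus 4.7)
The plan is to first bound $\delta(z)=|z-\pi(z)|$ quadratically by $D(z)$, and then to apply the forward and reverse triangle inequalities to transfer the size estimate from $|\pi(z)-(1,0)|$ to $|z-(1,0)|$. Since $k$ ranges over the finite set $\{2,\dots,\tau\}$, the infimum in \eqref{equD} is attained at some $k^\ast=k^\ast(z)$, giving the identity $\delta(z)=\Lambda_{k^\ast}(\pi(z))\, D(z)^{k^\ast}$. (The infimum is finite because the type $\tilde\tau$ of $(1,0)$ is finite and $\Lambda_{\tilde\tau}$ is continuous and nonzero in a neighborhood of $(1,0)$, so at least one term contributes a finite value.) The functions $\Lambda_k$ are continuous, hence uniformly bounded by some $M$ on a small closed neighborhood of $(1,0)$, and $D(z)$ itself is bounded there, so from $k^\ast\geq 2$ I obtain the key quadratic estimate $\delta(z)\leq M D(z)^{k^\ast}\leq C D(z)^2$. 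In particular $\delta(z)=o(D(z))$ as $z\to\partial\Omega$.

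By hypothesis there exist constants $0<c_1\leq c_2$ with $c_1 D(z)\leq|\pi(z)-(1,0)|\leq c_2 D(z)$. The forward triangle inequality then gives
\[
|z-(1,0)|\leq\delta(z)+|\pi(z)-(1,0)|\leq C D(z)^2+c_2 D(z)\lesssim D(z),
\]
while the reverse triangle inequality gives
\[
|z-(1,0)|\geq|\pi(z)-(1,0)|-\delta(z)\geq c_1 D(z)-C D(z)^2 = D(z)\bigl(c_1-C D(z)\bigr).
\]
For $z$ close enough to $\partial\Omega$ that $D(z)\leq c_1/(2C)$, the right-hand side is at least $(c_1/2)\, D(z)$; combined with the upper bound this yields $|z-(1,0)|\sim D(z)$, as required.

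The only substantive step is the quadratic estimate $\delta(z)\lesssim D(z)^2$, which is the main obstacle (though really more of a bookkeeping observation). It relies essentially on the fact that the iterated commutators defining $\mathscr{M}_k$ begin at degree $k=2$, forcing the exponent $k^\ast$ in the identity $\delta(z)=\Lambda_{k^\ast}(\pi(z))D(z)^{k^\ast}$ to be at least $2$, together with the local boundedness of the $\Lambda_k$ near $(1,0)$ guaranteed by the finite-type hypothesis.
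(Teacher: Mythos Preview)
Your proof is correct and follows essentially the same approach as the paper: both arguments establish $\delta(z)=o(D(z))$ from the fact that the minimizing index in \eqref{equD} is at least $2$, and then finish with the triangle inequality. The only cosmetic difference is that the paper identifies the minimizer with the type $\tau_z$ of $\pi(z)$ and writes $D(z)\sim\delta(z)^{1/\tau_z}$, whereas you keep the minimizer $k^\ast$ abstract and bound $\Lambda_{k^\ast}$ uniformly.
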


\begin{proof}
From the definition of $D(z)$ and the discussion of the iterated commutators, we know that 
\begin{equation}
D(z)=\Big(\frac{\delta(z)}{\Lambda_{\tau_z}\big(\pi(z)\big)}\Big)^{1/{\tau_z}} \sim \big(\delta(z)\big)^{1/{\tau_z}}, \text{or } \big(D(z)\big)^{\tau_z} \sim \delta(z),
\label{estimate of D}
\end{equation}
where $\tau_z=\tau\big(\pi(z)\big)$ is the type of $\pi(z)$.

We know that, since $\tau_z \geqslant 2$, as $\delta(z) \ll  1$, it is true that $\delta(z) \ll D(z)$.

As a result,
\begin{equation}
|z-(1,0)|=|z-\pi(z)+\pi(z)-(1,0)| \sim D(z).
\label{lemmapf}
\end{equation}
\end{proof}

Therefore we know that the following defines an approach region, denoted by $\mathscr{A}(1,0)$, which is comparable to $\mathscr{A}_1(1,0)$:
\begin{equation}
\left\{
\begin{array}{lr}
|z-(1,0)|<D(z),&\\
\Big|R\big(\pi(z),(1,0)\big)\Big|<\Lambda^{D(z)}(1,0).& \\
\end{array} \right.
\label{armodify}
\end{equation}

\section{The Best Approach Region}
Let $h_1$ and $h_2$ be two real-valued continuously decreasing functions such that $h_i: (0,1] \to [1, +\infty)$ and $\lim_{x \to 0+}{h_i(x)}=+\infty$, $i=1,2$. We may assume that they decrease to 1 very slowly. 

Now we consider an approach region in $\Omega$ at the point $w \in \partial \Omega$, denoted by $\mathscr{A}_{h_1, h_2}(w)$, defined by the following inequalities:
\begin{equation}
\label{complexbroader}
\left\{
\begin{array}{lr}
|z-w|< h_1\big(\delta_n(z)\big) D(z),&\\
\Big|R\big(\pi(z),w\big)\Big|<h_2\big(\delta_n(z)\big)\Lambda^{D(z)}(w),& \\
\end{array} \right.
\end{equation}
where $\delta_n(z)$ is the distance from $z$ to $\pi(z)$ in the complex normal direction.

We can compare $\mathscr{A}_{h_1, h_2}(1,0)$ with $\mathscr{A}(1,0)$ to see how these two kinds of domain are related.

First of all, $\mathscr{A}(1,0) \subseteq  \mathscr{A}_{h_1, h_2}(1,0)$. If $z \in \mathscr{A}_{h_1, h_2}(1,0)-\mathscr{A}(1,0)$, then $\delta_n(z)$ is very small. This means $\mathscr{A}_{h_1, h_2}(1,0)$ is very similar to $\mathscr{A}(1,0)$, but compared with $\mathscr{A}(1,0)$ it is broader in the complex tangential direction.

Then, the main result of this paper is: there is no Fatou's theorem for this kind of tangentially broader region $\mathscr{A}_{h_1, h_2}$. Therefore the approach regions of Nagel-Stein-Wainger-Neff type are the best possible ones. 

To see this, we are going to construct a bounded holomorphic function $f$ that does not have a limit $\mathscr{A}_{h_1, h_2}$-admissibly at any point in $U$. It is inspired by Hakim and Sibony's work in \cite{HS}.

For each $r>0$, there exists a set of points $\{\zeta_j\}_{j \in J}$, such that $\{\beta_2(\zeta_j, r^\tau)\}$ is a maximal family of pairwisely disjoint balls in $U$(See \cite{HS}). 

For each $\zeta_j=(\zeta_{j,1},\zeta_{j,2})$, define 
\begin{displaymath}
V_r(\zeta_j)=\{\zeta \in U: |\zeta - \zeta_j| < K r^\tau, \big|R(\zeta, \zeta_j)\big| < \Lambda^{Kr}(\zeta_j)\},
\end{displaymath}
where $K$ is a positive constant.

We want to show that:
\begin{lemma}
\begin{displaymath}
\bigcup_{j \in J}V_r(\zeta_j)=U.
\end{displaymath}
\end{lemma}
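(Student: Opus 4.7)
\medskip
\noindent\textbf{Proof plan.}
The argument is a Vitali-type covering argument: maximality of the disjoint family $\{\beta_2(\zeta_j,r^\tau)\}$ forces every point $\zeta\in U$ to sit in an appropriately enlarged ball $V_r(\zeta_j)$, once the enlargement constant $K$ is chosen to absorb the errors coming from an approximate triangle inequality for the quasi-metric $(|\cdot-\cdot|,|R(\cdot,\cdot)|)$. Fix an arbitrary $\zeta\in U$. Since the family is maximal among pairwise disjoint $\beta_2$-balls of radius $r^\tau$ in $U$, the ball $\beta_2(\zeta,r^\tau)$ cannot be disjoint from every $\beta_2(\zeta_j,r^\tau)$: otherwise we could enlarge the family by adjoining $\zeta$. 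So pick $j\in J$ and a point $\eta$ in the intersection. Then $|\eta-\zeta|<r^\tau$, $|R(\eta,\zeta)|<\Lambda^{r^\tau}(\zeta)$, and $|\eta-\zeta_j|<r^\tau$, $|R(\eta,\zeta_j)|<\Lambda^{r^\tau}(\zeta_j)$.

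The Euclidean estimate in the definition of $V_r(\zeta_j)$ is immediate from the triangle inequality: $|\zeta-\zeta_j|\le|\zeta-\eta|+|\eta-\zeta_j|<2r^\tau$, so the first condition holds as soon as $K\ge 2$. The substance of the proof is the second condition. For that I would combine two properties of the polarization $R$. First, by the near-symmetry hypothesis, $R(\zeta,\zeta_j)-\overline{R(\zeta_j,\zeta)}$ vanishes to infinite order on the diagonal; since $|\zeta-\zeta_j|<2r^\tau$ this gives $|R(\zeta,\zeta_j)-\overline{R(\zeta_j,\zeta)}|=O(r^{N\tau})$ for any $N$. Second, by the smoothness of $R$ in its first argument, a first-order Taylor expansion around $\eta$ yields $|R(\zeta_j,\zeta)-R(\eta,\zeta)|\le C\,|\zeta_j-\eta|<Cr^\tau$. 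Combining these with $|R(\eta,\zeta)|<\Lambda^{r^\tau}(\zeta)$ gives
\[
|R(\zeta,\zeta_j)|\;<\;\Lambda^{r^\tau}(\zeta)+Cr^\tau+O(r^{N\tau}).
\]

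The remaining task, and the real obstacle, is to absorb the right-hand side into $\Lambda^{Kr}(\zeta_j)$ with a single constant $K$ independent of $j$ and $\zeta$. Since $\zeta_j$ has type $\tau_{\zeta_j}\le\tau$, we have $\Lambda_{\tau_{\zeta_j}}(\zeta_j)>0$, and because $r\le 1$,
\[
\Lambda^{Kr}(\zeta_j)\;\ge\;(Kr)^{\tau_{\zeta_j}}\Lambda_{\tau_{\zeta_j}}(\zeta_j)\;\ge\;K^{\tau_{\zeta_j}}\,r^{\tau}\,\Lambda_{\tau_{\zeta_j}}(\zeta_j).
\]
The same type-stability remark shows that $\Lambda^{r^{\tau}}(\zeta)$ is of order $r^{2\tau}$ or smaller and is therefore negligible compared with $r^\tau$. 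Hence it suffices to choose $K$ large enough that $K^{\tau_{\zeta_j}}\Lambda_{\tau_{\zeta_j}}(\zeta_j)$ dominates $C$, plus a bit to swallow the infinite-order tail. Because the type $\tau_z$ is upper semi-continuous and the $\Lambda_k$ are continuous, we may (if necessary) shrink $U$ around $(1,0)$ so that $\min_{\zeta\in U}\Lambda_{\tau_\zeta}(\zeta)$ is bounded below; then $K$ can be chosen uniformly in $U$, which completes the proof. The hard step here is really Step 3, the approximate triangle inequality for $R$, since it is the only place where one uses the defining conditions $R(z,z)=\rho(z)$ together with the two "infinite-order vanishing" axioms.
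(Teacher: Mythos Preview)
Your argument is correct and follows the same Vitali-type covering strategy as the paper: use maximality of the disjoint family to force any $\zeta\in U$ near some $\zeta_j$, then enlarge the constant $K$ to absorb the error terms in both the Euclidean and the $R$-conditions, the latter via the lower bound $\Lambda^{Kr}(\zeta_j)\ge (Kr)^{\tau_{\zeta_j}}\Lambda_{\tau_{\zeta_j}}(\zeta_j)\gtrsim K^2 r^\tau$.

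The one difference worth noting is in how you bound $|R(\zeta,\zeta_j)|$. You route through the intersection point $\eta$ and invoke both the near-symmetry axiom (to pass from $R(\zeta,\zeta_j)$ to $\overline{R(\zeta_j,\zeta)}$) and Lipschitz continuity of $R$ in its first slot. The paper instead takes the simpler path of estimating directly
\[
|R(\zeta,\zeta_j)| \le |R(\zeta,\zeta)| + |R(\zeta,\zeta)-R(\zeta,\zeta_j)| \le 0 + C\,|\zeta-\zeta_j| < 2C r^\tau,
\]
using only $R(\zeta,\zeta)=\rho(\zeta)=0$ and Lipschitz continuity in the \emph{second} slot; the near-symmetry hypothesis is never needed. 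Conversely, the paper is less explicit than you are at the maximality step---it simply asserts that some $\zeta_j$ lies in $\beta_2(w,2r^\tau)$---whereas your choice of an actual intersection point $\eta$ makes that step transparent.
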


\begin{proof}
First of all, we realize that we only need to prove that $U \subset \bigcup_{j \in J}V_r(\zeta_j)$.

Without loss of generality, we just need to show that there exists $\zeta_j$ such that 
\begin{equation}
(1,0) \in V_r(\zeta_j),
\label{bndcover}
\end{equation}
because, for any other point in the domain $U$, the same method below shows that it also belongs to $V_r(\zeta_i)$ for some $i \in J$.

Therefore we just need to check that
\begin{equation}
\left\{
\begin{array}{lr}
|(1,0)-\zeta_j|< K r^\tau,&\\
\Big|R\big((1,0),\zeta_j \big)\Big|<\Lambda^{Kr}(\zeta_j).& \\
\end{array} \right.
\label{bndcover2}
\end{equation}

Of course, points close enough to $(1,0)$ will satisfy \eqref{bndcover2}. Since we are considering a small neighborhood around $(1,0)$, we can find a point $w \in U$ such that $w$ satisfies the inequalities in \eqref{bndcover2} and we may assume that
\begin{equation}
\left\{
\begin{array}{lr}
|(1,0)-w|<(K-2) r^\tau,&\\
\Big|R\big((1,0),w\big)\Big|< r^\tau.& \\
\end{array} \right.
\label{w_equ}
\end{equation}

Since $\{\beta_2(\zeta_j, r^\tau)\}$ makes a maximal family in $U$, there must exist a point $\zeta_j$ in the ball $\beta_2(w,2r^\tau)$ for some $j \in J$. We then want to check that this $\zeta_j$ makes the inequalities in \eqref{bndcover2} valid, and then the claim is proved.

To see this, we first check an arbitrary point $\zeta \in \beta_2(w,2r^\tau)$. Immediately by the triangle inequality we know that 
\begin{align}
|1-\zeta|&\leqslant \big|(1,0)-w\big|+|w-\zeta| \nonumber \\
         &<(K-2)r^\tau+2r^\tau \nonumber \\
         &=K r^\tau .
         \label{key1}
\end{align}

To check the second inequality in \eqref{bndcover2}, 
we first have 
\begin{equation}
\Big|R\big((1,0),\zeta \big)\Big| \leqslant \Big|R\big((1,0),w \big)\Big|+ \Big|R\big((1,0),w \big)-R\big((1,0),\zeta \big)\Big|.
\end{equation}

Since we already know that $\Big|R\big((1,0),w \big)\Big| < r^\tau$ and 
\begin{equation}
\Big|R\big((1,0),w \big)-R\big((1,0),\zeta \big)\Big| < K_1 |w-\zeta| <2K_1 r^\tau,
\end{equation}
it is true that
\begin{equation}
\Big|R\big((1,0),\zeta \big)\Big| < K_2 r^\tau.
\label{key2}
\end{equation}

By inequalities \eqref{key1} and \eqref{key2}, we can choose a positive constant $K$ big enough such that 
\begin{equation}
\left\{
\begin{array}{lr}
|(1,0)-\zeta|<Kr^\tau,&\\
\Big|R\big((1,0),\zeta \big)\Big|< \Lambda^{Kr}(\zeta)=\Lambda_{\tau_\zeta}(\zeta)(Kr)^{\tau_\zeta}+\cdots+\Lambda_{\tau}(\zeta)(Kr)^{\tau} .& \\
\end{array} \right.
\end{equation}

Since there must be one $\zeta_j$ in $\beta_2(w, 2r^\tau)$ as argued, this $\zeta_j$ then satisfies the inequalities in \eqref{bndcover2}, which means that we have $(1,0) \in V_r(\zeta_j)$, and then our claim is proved.

\end{proof}

For $n \in \mathbb{N}$, $r>0$ and $\{\zeta_j\}_{j \in J} \subset U$, define
\begin{displaymath}
g_{n,r}(z)=\sum_{j \in J}\Big(\frac{r^\tau}{R(z, \zeta_j) -r^\tau}\Big)^{2n},
\end{displaymath} 
and then define $f_n=1-\varepsilon_n-g_{n,r}$, where $\varepsilon_n =n ^{-1/4}$, and we know that there exists a subsequence $\{\varepsilon_{n_k}\}$ with $\sum{\varepsilon_{n_k}} < \infty$.

\begin{lemma}
For any $z \in U$ and $n \in \mathbb{N}$ large enough, $|g_{n,r}(z)| \leqslant 1+ \frac{A}{n}$, where $A$ is a positive constant.
\end{lemma}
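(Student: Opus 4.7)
The strategy is to peel off one dominant summand from $g_{n,r}(z)$ and control everything else as a tail.

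By the previous lemma there is at least one $j_0\in J$ such that $z\in V_r(\zeta_{j_0})$. I would first isolate
\begin{displaymath}
T_{j_0}=\Big(\frac{r^\tau}{R(z,\zeta_{j_0})-r^\tau}\Big)^{2n}
\end{displaymath}
and bound $|T_{j_0}|$ by $1+O(1/n)$. The key input is the structure of the polarization: because $R(z,z)=\rho(z)=0$ on $\partial\Omega$ and $R(z,w)-\overline{R(w,z)}$ vanishes to infinite order on $z=w$, a short expansion gives $\Re R(z,\zeta_{j_0})=o(r^\tau)$ uniformly for $z\in V_r(\zeta_{j_0})\cap\partial\Omega$. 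Combined with the bound $|R(z,\zeta_{j_0})|<\Lambda^{Kr}(\zeta_{j_0})$ built into the definition of $V_r$, this yields
\begin{displaymath}
|R(z,\zeta_{j_0})-r^\tau|^2=|R(z,\zeta_{j_0})|^2-2r^\tau\Re R(z,\zeta_{j_0})+r^{2\tau}\;\geqslant\;r^{2\tau}\bigl(1-o(1)\bigr),
\end{displaymath}
hence $|T_{j_0}|\leqslant 1+A_0/n$ for $n$ large.

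For the remaining indices I would arrange them into dyadic shells in the Nagel--Stein--Wainger pseudo-metric: for $k\geqslant 1$ set
\begin{displaymath}
S_k=\{j\in J:z\in V_{2^kr}(\zeta_j)\setminus V_{2^{k-1}r}(\zeta_j)\}.
\end{displaymath}
Two ingredients are needed. First, for $j\in S_k$ one shows that $|R(z,\zeta_j)-r^\tau|\gtrsim 2^{ck}r^\tau$ for some fixed $c>0$ depending on $K$ and $\tau$, so each such summand has absolute value at most $C\cdot 2^{-2nck}$. Second, the disjointness of $\{\beta_2(\zeta_j,r^\tau)\}_{j\in J}$ combined with the standard packing/doubling estimate for the quasi-balls forces $\#S_k\leqslant C_1\cdot 2^{C_2 k}$ with $C_2$ depending only on $\tau$. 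Summing,
\begin{displaymath}
\sum_{j\neq j_0}\Big|\frac{r^\tau}{R(z,\zeta_j)-r^\tau}\Big|^{2n}\leqslant C\sum_{k\geqslant 1}2^{(C_2-2nc)k},
\end{displaymath}
which for $n$ large is dominated by a geometric series of ratio $2^{-nc}$ and is in particular $\ll 1/n$.

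Adding the main term and the tail gives $|g_{n,r}(z)|\leqslant 1+A/n$. The step I expect to be the main obstacle is the packing count $\#S_k\lesssim 2^{C_2 k}$, which requires a doubling estimate for the NSW quasi-balls on $\partial\Omega\subset\CCC^2$; this is standard under the finite-type assumption but must be set up with the correct quantitative dependence on $r$ and $\tau$. The near-term bound is, by comparison, a direct consequence of the three defining properties of the polarization $R$.
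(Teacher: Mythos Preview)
Your overall architecture---isolate a near term, arrange the remaining $\zeta_j$ into shells, invoke a packing/doubling count for the disjoint family $\{\beta_2(\zeta_j,r^\tau)\}$, and sum---is exactly the paper's. The paper phrases the packing as $N_{k,r}\leqslant Ck^t$ (polynomial in a \emph{linear} shell index) rather than $2^{C_2k}$ in a dyadic index, but this is only bookkeeping.

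There is, however, a genuine arithmetical gap in your near-term step. From $\Re R(z,\zeta_{j_0})=o(r^\tau)$ you obtain $|R-r^\tau|^2\geqslant r^{2\tau}(1-o(1))$, and then assert $|T_{j_0}|\leqslant 1+A_0/n$. But the $o(1)$ here depends on $r$, not on $n$; raising $(1-o(1))^{-1}$ to the power $n$ gives a quantity that \emph{blows up} as $n\to\infty$ for fixed $r$, not $1+O(1/n)$. The paper avoids this entirely: it uses (implicitly, via the displayed inequality $|R-r^\tau|\geqslant(|R|^2+r^{2\tau})^{1/2}$) that $\Re R(z,\zeta_j)\leqslant 0$, which yields the clean bound
\[
\Big|\frac{r^\tau}{R(z,\zeta_j)-r^\tau}\Big|^{2n}\leqslant \frac{1}{(1+k^2)^n}\quad\text{whenever }|R(z,\zeta_j)|\geqslant kr^\tau,
\]
so in particular every single term is $\leqslant 1$ and no separate near-term analysis is needed.

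A second, smaller issue: your shells $S_k$ are defined through the two-condition sets $V_{2^kr}(\zeta_j)$, but the summand depends only on $R(z,\zeta_j)$. If $z\notin V_{2^{k-1}r}(\zeta_j)$ because the \emph{Euclidean} condition fails while the $R$-condition still holds, you get no lower bound on $|R(z,\zeta_j)-r^\tau|$ and your claimed estimate $\gtrsim 2^{ck}r^\tau$ does not follow. The paper sidesteps this by decomposing directly in the variable that matters, taking shells $J(z,k)=\{\zeta_j: kr^\tau\leqslant |R(z,\zeta_j)|<(k+1)r^\tau\}$ and then invoking the packing count to bound $\#J(z,k)$. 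If you switch to this decomposition your tail argument goes through unchanged.
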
 
\begin{proof}
Let $\zeta_0 \in U$ be an arbitrary point and $N_{k,r}$ be the number of balls $\beta_2(\zeta_j, r^\tau)$ that are contained in the ball $\beta_2(\zeta_0, kr^\tau)$. Then we know that
\begin{equation}
N_{k,r} \leqslant C k^t,
\end{equation}
where $t$ is a positive integer and $C$ is a positive constant.

Now fix a point $\zeta \in \Omega$. For any $k \in \mathbb{N}$, define a subfamily of $\{\zeta_j\}_{j \in J}$:
\begin{displaymath}
J(\zeta, k)=\{\zeta_j : kr^\tau \leqslant |R(\zeta, \zeta_j)| < (k+1) r^\tau\}.
\end{displaymath}

With these preparations, we can estimate $|g_{n,r}|$.

First of all, we notice that if $|R(z, \zeta_j)| \geqslant kr^\tau$, we can get
\begin{align}
\Big|\frac{r^\tau}{R(z,\zeta_j)-r^\tau}\Big|^{2n}&\leqslant \Big|\frac{r^\tau}{\big(|R(z,\zeta_j)\big|^2+r^{2\tau}\big)^{\frac{1}{2}}}\Big|^{2n} \nonumber \\
         &\leqslant \Big|\frac{r^\tau}{\big(k^2 r^{2\tau}+r^{2\tau}\big)^\frac{1}{2}}\Big|^{2n} \nonumber \\
         &=\frac{1}{(1+k^2)^n}.
\end{align}
It then follows that, for $n$ large enough,
\begin{equation}
|g_{n,r}(z)| \leqslant 1 + A_1\sum_{k=1}^{\infty}\frac{k^t}{(1+k^2)^n} \leq 1+\frac{A}{n}.
\end{equation}
\end{proof}

Then we will be able to see more about the functions $f_n$.

\begin{lemma}
For each $\zeta_j$, there exists a zero of $f_n$. Moreover, this zero will approach to $\zeta_j$ as $n$ goes to infinity.
\end{lemma}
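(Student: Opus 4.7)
The plan is to locate a zero by Rouché's theorem, isolating the single ``dominant'' summand $F(w)=(r^\tau/(w-r^\tau))^{2n}$ of $g_{n,r}$ (the one at index $j$) and treating the remaining sum as a small perturbation.

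First I would pass to a one-dimensional complex slice through $\zeta_j$. Since $\partial_z R(\zeta_j,\zeta_j)$ is a nonzero multiple of $\partial\rho(\zeta_j)\neq 0$, the function $w:=R(z,\zeta_j)$ is, up to a $\overline{\partial}$-error vanishing to infinite order at $\zeta_j$, a holomorphic coordinate along a complex line $\ell$ through $\zeta_j$ that is transverse to $\partial\Omega$ and points into $\Omega$. Writing $z=z(w)$ for the local inverse, and noting $\zeta_j\mapsto 0$, $f_n$ pulls back to
\begin{equation*}
\tilde f_n(w)\;=\;1-\varepsilon_n-F(w)-H_n(w),
\qquad
H_n(w)=\sum_{i\neq j}\Big(\frac{r^\tau}{R(z(w),\zeta_i)-r^\tau}\Big)^{2n}.
\end{equation*}

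Next I would solve the principal equation $F(w)=1-\varepsilon_n$ exactly: its $2n$ roots are $w_k=r^\tau\bigl(1+(1-\varepsilon_n)^{-1/(2n)}e^{i\pi k/n}\bigr)$, $k=0,\ldots,2n-1$, and the one nearest $0$ is $w_\ast=r^\tau\bigl(1-(1-\varepsilon_n)^{-1/(2n)}\bigr)\sim -\varepsilon_n r^\tau/(2n)$. A first-order expansion gives $|F'(w_\ast)|\sim 2n/r^\tau$, so on the circle $\gamma=\{|w-w_\ast|=\eta\}$ the lower bound $|F(w)-(1-\varepsilon_n)|\gtrsim n\eta/r^\tau$ holds. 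To estimate $|H_n(w)|$ on $\gamma$, I would exploit the disjointness of $\{\beta_2(\zeta_i,r^\tau)\}$: for $z(w)$ within distance $\eta\ll r^\tau$ of $\zeta_j$, each $|R(z(w),\zeta_i)|$ with $i\ne j$ is bounded below by a fixed multiple of $r^\tau$, so each summand decays geometrically in $n$, and combined with the counting estimate $N_{k,r}\le Ck^t$ from the previous lemma this yields $|H_n(w)|\le A/n$. Choosing $\eta\sim r^\tau/n^{3/2}$ makes $|F(w)-(1-\varepsilon_n)|\gtrsim n^{-1/2}$ on $\gamma$, which strictly dominates $|H_n(w)|$ for $n$ large. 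Rouché's theorem then produces exactly one zero $w_n^\sharp$ of $\tilde f_n$ inside $\gamma$; the point $z_n^\sharp=z(w_n^\sharp)\in\ell\cap\Omega$ is a zero of $f_n$. Since $|w_n^\sharp|\le|w_\ast|+\eta\to 0$, the continuity of $z(\cdot)$ forces $z_n^\sharp\to z(0)=\zeta_j$, giving the ``moreover'' part.

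The main obstacle is the quantitative separation estimate behind the bound on $|H_n|$: one must turn the disjointness of the polarized pseudoballs $\beta_2(\zeta_i,r^\tau)$ into a uniform lower bound $|R(z(w),\zeta_i)|\gtrsim r^\tau$ on $\gamma$, sharp enough to beat both the $2n$-th power in each summand and the polynomial-in-$k$ count of nearby centers. The minor technicality that $R$ is only almost-holomorphic in $z$ is harmless, because $\overline{\partial}_z R$ vanishes to infinite order at $\zeta_j$, so the holomorphic approximation of $F$ used for Rouché is valid up to errors much smaller than $\eta$.
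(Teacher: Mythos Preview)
Your proposal is correct and follows essentially the same route as the paper: isolate the dominant summand $F(w)=(r^\tau/(w-r^\tau))^{2n}$, locate its zero $w_\ast=r^\tau\bigl(1-(1-\varepsilon_n)^{-1/(2n)}\bigr)$, bound the remaining tail by $A/n$ via the counting estimate from the previous lemma, and apply Rouch\'e on a small circle about $w_\ast$. The only cosmetic differences are that the paper slices along the concrete line $z_2=\mathrm{const}$ (chosen so that $\pi(z_1,z_2)=\zeta_j$) rather than using $w=R(z,\zeta_j)$ as an abstract coordinate, and it takes Rouch\'e radius $\gamma_n r^\tau$ with $\gamma_n=n^{-4/3}$ instead of your $\eta\sim r^\tau n^{-3/2}$; both choices give a lower bound on $|F-(1-\varepsilon_n)|$ that beats $A/n$, so this is immaterial.
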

\begin{proof}
Here we are just going to consider the case for $\zeta_1=(1,0)$. This method also applies for other $\zeta_j$.

We introduce two auxiliary functions:
\begin{displaymath}
\phi_n(z_1)=f_n(z_1,0)
\end{displaymath}
and
\begin{displaymath}
\psi_n(z_1)=1-\varepsilon_n-\Big(\frac{r^\tau}{R\big((z_1,0), (1,0)\big)-r^\tau}\Big)^{2n}.
\end{displaymath}

Immediately we know that if $R\big((z_1,0),(1,0)\big)=r^\tau\big(1-(1-\varepsilon_n)^{-\frac{1}{2n}}\big)$, then $z_1$ is a zero of $\psi$.

We can choose a positive sequence $\{\gamma_n\}$ with $\gamma_n=n^{-4/3}$. On the closed curve $z$ such that $R\big((z,0), (1,0)\big)-R\big((z_1,0), (1,0)\big)=\gamma_n r^\tau e^{i\theta}$, we estimate that 
\begin{equation}
|\psi_n(z)|=|2n\gamma_n e^{i\theta}(1-\varepsilon_n)^{1/{2n}}+ O(\varepsilon^2)|.
\end{equation}

On the other hand, we then see that
\begin{equation}
|\phi_n(z_1)-\psi_n(z_1)|=\Big|\sum_{\zeta_j \neq (1,0)}\Big(\frac{r^\tau}{R(z, \zeta_j)-r^\tau}\Big)^{2n}\Big|.
\end{equation}

If $z_1$ is close enough to 1, the same argument as in the previous proof indicates that 
\begin{equation}
\Big|\sum_{\zeta_j \neq (1,0)}\Big(\frac{r^\tau}{R(z, \zeta_j)-r^\tau}\Big)^{2n}\Big| \leqslant \frac{A_2}{n}.
\end{equation}

Therefore, on this closed curve, we have 
\begin{equation}
|\phi_n(z_1)-\psi_n(z_1)| < |\psi_n(z_1)|,
\end{equation}
and then by Rouch\'{e}'s theorem, we know that $\phi_n$ also has at least a zero $\omega_{n,r}$ in the region bounded by the closed curve. 

According to the construction of the function $\phi_n$ we then know that $f_n$ has a zero $w_{n,r}=(\omega_{n,r},0)$. By checking the arguement again, we know that $w_{n,r}$ approaches to $(1,0)$ as $n$ goes to infinity. 

For other $\zeta_j$, we define
\begin{displaymath}
\phi_n(z_1)=f_n(z_1,z_2),
\end{displaymath}
and
\begin{displaymath}
\psi_n(z_1)=1-\varepsilon_n-\Big(\frac{r^\tau}{R\big((z_1,z_2), (1,0)\big)-r^\tau}\Big)^{2n},
\end{displaymath}
in both of which $z_2$ is such a complex number that $\pi(z_1,z_2)=\zeta_j$. Then we can do the same argument to show that the claim is true and in this case we have $\pi(w_{n,r})=\zeta_j$.
\end{proof}

The next key lemma states that if a boundary point is close enough to $\zeta_j$, then the broader approach region based there contains a zero of $f_n$.
\begin{lemma}
For each $n$ we can choose $r=r_n$ such that, if $w \in V_r(\zeta_j)$, then $\mathscr{A}_{h_1, h_2}(w)$ contains a zero of $f_n$.
\end{lemma}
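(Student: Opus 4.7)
The plan is to show that the zero $w_{n,r}$ of $f_n$ with $\pi(w_{n,r})=\zeta_j$ produced by the previous lemma automatically lies in $\mathscr{A}_{h_1,h_2}(w)$ for every $w\in V_r(\zeta_j)$, provided $r=r_n$ is chosen small enough in terms of $n$ and of the growth of $h_1,h_2$ at $0^+$. The argument breaks into three parts: first, extract from the Rouch\'e step a sharp bound on how close $w_{n,r}$ lies to the boundary; second, use the comparability $D(z)\sim\delta(z)^{1/\tau_z}$ from \eqref{estimate of D} to turn this into lower bounds on $D(w_{n,r})$ and $\Lambda^{D(w_{n,r})}(w)$; third, compare these with the tangential scale $r^\tau$ coming from membership in $V_r(\zeta_j)$.

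First I would revisit the Rouch\'e step in the preceding lemma. The zero $w_{n,r}$ sits inside a curve on which $|R(\,\cdot\,,\zeta_j)-c_n|=\gamma_n r^\tau$, with $c_n=r^\tau[1-(1-\varepsilon_n)^{-1/2n}]$; a Taylor expansion yields $|c_n|=O(\varepsilon_n r^\tau/n)=O(r^\tau/n^{5/4})$, so $|R(w_{n,r},\zeta_j)|\lesssim r^\tau/n$. Since $\overline{\partial}_z R$ vanishes to infinite order on the diagonal and $R(\zeta_j,\zeta_j)=0$, the function $z\mapsto R(z,\zeta_j)$ agrees to first order with $\rho$ along the outward normal at $\zeta_j$; hence $\delta(w_{n,r})\sim|R(w_{n,r},\zeta_j)|\lesssim r^\tau/n$, and in particular $\delta_n(w_{n,r})\lesssim r^\tau/n$. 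By \eqref{estimate of D},
\[
D(w_{n,r})\gtrsim (r^\tau/n)^{1/\tau_{\zeta_j}}\geq r\,n^{-1/\tau},\qquad \Lambda^{D(w_{n,r})}(w)\gtrsim D(w_{n,r})^{\tau_{\zeta_j}}\gtrsim r^{\tau_{\zeta_j}}n^{-\tau_{\zeta_j}/\tau}.
\]

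Next I would feed in the hypothesis $w\in V_r(\zeta_j)$, which gives $|\zeta_j-w|<Kr^\tau$ and $|R(w,\zeta_j)|\lesssim (Kr)^{\tau_{\zeta_j}}$; the third defining property of the polarization then also gives $|R(\zeta_j,w)|\lesssim r^{\tau_{\zeta_j}}$. Combining,
\[
|w_{n,r}-w|\leq \delta(w_{n,r})+|\zeta_j-w|\lesssim r^\tau,\qquad |R(\pi(w_{n,r}),w)|\lesssim r^{\tau_{\zeta_j}},
\]
so $w_{n,r}\in\mathscr{A}_{h_1,h_2}(w)$ reduces to
\[
h_1(r^\tau/n)\gtrsim r^{\,\tau-1}n^{1/\tau},\qquad h_2(r^\tau/n)\gtrsim n^{\tau_{\zeta_j}/\tau}.
\]

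The only real obstacle is to make a single choice $r=r_n$ that realises both inequalities simultaneously for every $j\in J$. Uniformity in $j$ is automatic because the implicit constants depend only on $R$, $\tau$, and $U$, and because $\tau_{\zeta_j}/\tau\leq 1$. With $n$ fixed the right-hand sides are finite, while the left-hand sides tend to $+\infty$ as $r\to 0^+$ (since their arguments $r^\tau/n$ tend to $0$ and $h_1,h_2\to+\infty$ there); so any sufficiently small $r_n>0$ does the job, and this completes the plan.
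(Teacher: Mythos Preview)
Your overall strategy matches the paper's exactly: take the zero $w_{n,r}$ with $\pi(w_{n,r})=\zeta_j$ from the previous lemma, estimate its distance to the boundary, and verify the two inequalities defining $\mathscr{A}_{h_1,h_2}(w)$, closing at the end by letting $r\to 0$ so that $h_1,h_2\to\infty$. However, there are two concrete gaps in the execution.

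First, you only prove the \emph{upper} bound $\delta(w_{n,r})\lesssim r^\tau/n$, and then write $D(w_{n,r})\gtrsim (r^\tau/n)^{1/\tau_{\zeta_j}}$. Since $D\sim\delta^{1/\tau_{\zeta_j}}$, a lower bound on $D$ requires a \emph{lower} bound on $\delta$; your upper bound goes the wrong way. The paper deals with exactly this point by first isolating, as a ``foundation'', the estimate $\delta(w_{n,r})>C\,r^\tau$ (with $C$ depending on $n$), and everything afterwards is phrased in terms of $\delta(w_{n,r})$ rather than $r$. You can recover a lower bound from the Rouch\'e data (since $|c_n|\sim r^\tau\varepsilon_n/2n$ dominates $\gamma_n r^\tau$), but you need to say so and then use it.

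Second, your chain $(r^\tau/n)^{1/\tau_{\zeta_j}}\geq r\,n^{-1/\tau}$ is false: because $\tau_{\zeta_j}\le\tau$ one has $r^{\tau/\tau_{\zeta_j}}\le r$ and $n^{-1/\tau_{\zeta_j}}\le n^{-1/\tau}$, so the inequality is reversed. Relatedly, $\Lambda^{D}(w)\sim D^{\tau_w}$, not $D^{\tau_{\zeta_j}}$; the leading exponent is governed by the type at $w$, not at $\zeta_j$. The paper keeps both types in play and bounds the ratio $\tau_w/\tau_{w_{n,r}}\le \tau/2$, arriving at the condition $\delta(w_{n,r})\lesssim h_2(\delta(w_{n,r}))\,\delta(w_{n,r})^{\tau/2}$. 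Once you fix the direction of the inequality and use $\tau_w$, your closing step ``choose $r_n$ small enough so that $h_i\to\infty$ beats the fixed power of $n$'' is exactly the paper's conclusion.
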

\begin{proof}
Again, without loss of generality, we may assume that $\zeta_1=(1,0)$, and only check this case. For other situations, the same method applies.

Suppose $w_{n,r}=(\omega_{n,r}, 0)$ is the zero of $f_n$ near $(1,0)$ as we had in the previous lemma. So now our task is to verify that  if $|(1,0)-w| < Kr^\tau$ and $\Big|R\big(w,(1,0)\big)\Big|<\Lambda^{Kr}(1,0)$, we should have 
\begin{equation}
\left\{
\begin{array}{lr}
|w_{n,r}-w|< h_1\big(\delta_n(w_{n,r})\big) D(w_{n,r}),&\\
\Big|R\big(\pi(w_{n,r}),w\big)\Big|<h_2\big(\delta_n(w_{n,r})\big)\Lambda^{D(w_{n,r})}(w).& \\
\end{array} \right.
\end{equation}

Before starting the work, we need a foundation. We would like to claim that there exists a positive constant $C$ such that
\begin{equation}
\delta(w_{n,r}) > C r^\tau.
\end{equation}
If otherwise, we will have $\Big|R\big(\pi(w_{n,r}),(1,0)\big)-R\big(w_{n,r},(1,0)\big)\Big| < C_1 r^\tau$ for any constant $C_1$. However, this implies that $h(z)=R\big(z,(1,0)\big)$, written as a polynomial of $z$, does not have terms with degree less or equal to $\tau$ other than the constant term. This violates that the maximal type in $U$ is $\tau$.

First of all, we know that 
\begin{equation}
|w_{n,r}-w|<|(1,0)-w_{n,r}|+|(1,0)-w|.
\end{equation}

On the other hand, we check that
\begin{align}
h_1(\delta_n(w_{n,r})) D(w_{n,r}) &>h_1(\delta(w_{n,r})) D(w_{n,r}) \nonumber \\
&> K_3 h_1(\delta(w_{n,r})) \delta(w_{n,r}) \nonumber \\
                                 &> \frac{K_4}{2} h_1(\delta(w_{n,r}))|(1,0)-w_{n,r}|+ \frac{K_4}{2} h_1(\delta(w_{n,r}))\delta(w_{n,r}),
\end{align}
because $\delta(w_{n,r})=|(1,0)-w_{n,r}|$, as $\pi(w_{n,r})=(1,0)$. (Recall that in proof of the previous lemma we have the result that $\pi(w_{n,r})=\zeta_j$.)

If $r$ is small enough, it is true that
\begin{equation}
|(1,0)-w_{n,r}| < |(1,0)-w_{n,r}|\cdot \frac{K_4}{2} h_1(\delta(w_{n,r}))
\end{equation}
and 
\begin{equation}
|(1,0)-w| <K r^\tau <\frac{K}{C}\delta(w_{n,r})< \frac{K_4}{2} h_1(\delta(w_{n,r}))\delta(w_{n,r}).
\end{equation}

These imply that, if $r$ is small enough , we will have 
\begin{equation}
|w_{n,r}-w|< h_1(\delta_n(w_{n,r})) D(w_{n,r}).
\end{equation}

Meanwhile, we have
\begin{align}
\Big|R\big(\pi(w_{n,r}),w\big)\Big| & \leqslant \Big|R\big(\pi(w_{n,r}),w\big)-R\big(\pi(w_{n,r}),(1,0)\big)\Big| \nonumber \\
& \ \ + \Big|R\big(\pi(w_{n,r}),(1,0)\big)-R\big(w_{n,r}, (1,0)\big)\Big|+ \Big|R\big(w_{n,r},(1,0)\big)\Big| \nonumber \\
& <K_5 |w-(1,0)| + K_6 |\pi(w_{n,r})-w_{n,r}| + \Big|R\big((z_1,0),(1,0)\big)\Big| +\gamma_n r^\tau \nonumber \\
& <K_7 r^\tau + K_6 \delta(w_{n,r}) + \frac{\varepsilon_n}{2n}r^\tau + \gamma_n r^\tau +o(\frac{\varepsilon_n}{n}) \nonumber \\
& < K_8 \delta(w_{n,r}).
\end{align}

We then consider $h_2\big(\delta_n(w_{n,r})\big)\Lambda^{D(w_{n,r})}(w).$

By definition, we know that 
\begin{align}
h_2\big(\delta_n(w_{n,r})\big)\Lambda^{D(w_{n,r})}(w) &> K_9 h_2\big(\delta(w_{n,r})\big)\big(D(w_{n,r})\big)^{\tau_w} \nonumber \\
 & > K_{10} h_2\big(\delta(w_{n,r})\big) \big(\delta(w_{n,r})\big)^{\frac{\tau_w}{\tau_{w_{n,r}}}} \nonumber \\
 & > K_{11}h_2\big(\delta(w_{n,r})\big) \big(\delta(w_{n,r})\big)^{\frac{\tau}{2}}.
\end{align}

We can then find $r$ so small that this inequality holds:
\begin{equation}
K_8 \delta(w_{n,r}) < K_{11}h_2\big(\delta(w_{n,r})\big) \big(\delta(w_{n,r})\big)^{\frac{\tau}{2}}.
\end{equation}

In this way we check that 
\begin{equation}
\Big|R\big(\pi(w_{n,r}),w\big)\Big| < h_2\big(\delta_n(w_{n,r})\big)\Lambda^{D(w_{n,r})}(w).
\end{equation}

Therefore, there exists an $r>0$ such that 
\begin{equation}
\left\{
\begin{array}{lr}
|w_{n,r}-w|< h_1\big(\delta_n(w_{n,r})\big) D(w_{n,r}),&\\
\Big|R\big(\pi(w_{n,r}),w\big)\Big|<h_2\big(\delta_n(w_{n,r})\big)\Lambda^{D(w_{n,r})}(w),& \\
\end{array} \right.
\end{equation}
that is, $\mathscr{A}_{h_1, h_2}(w)$ contains a zero, $w_{n,r}$, of $f_{n,r}$.
\end{proof}

Now we are going to construct a bounded holomorphic function $f$ such that, for any $\zeta \in U$, the limit
\begin{displaymath}
\lim_{\mathscr{A}_{h_1, h_2}(\zeta) \ni z \to \zeta}f(z) 
\end{displaymath}
does not exist.

As we have seen from the proof above, for each $n$ we can choose $r_n$ such that the lemma is true. Then we choose a subsequence $\{\varepsilon_{n_k}\}$ such that $\sum{\varepsilon_{n_k}}<\infty$. Also for each $\zeta_j$, we can find a zero $w_{n_k}$ for $f_{n_k}$ such that $\{w_{n_k}\}$ converges to $\zeta_j$. 

We then build a bounded holomorphic function in $\Omega$:
\begin{displaymath}
f(z)=\prod_{k=1}^{\infty}\frac{f_{n_k}(z)}{1-(1-\varepsilon_{n_k})g_k(z)},
\end{displaymath}
where $g_k=g_{n_k, r_{n_k}}$.

Fix an arbitrary point $\zeta \in U$. 
We know that, for each $n_k$, with the corresponding number $r_{n_k}$, there is a maximal set $\{\zeta_j\}_{j \in J}$, and by the first lemma, there exists a point $\zeta_j$ such that $\zeta \in V_{ r_{n_k} }(\zeta_j)$. So, by the third lemma we know that a zero of $f_{n_k}(z)$, thus a zero of $f$, is contained in $\mathscr{A}_{h_1, h_2}(\zeta)$. As $k$ goes to infinity, $r_{n_k}$ converges to 0 and therefore $V_{r_{n_k}}(\zeta)$ is shrinking, making $\zeta_j$ converge to $\zeta$. Thus we have a sequence of zeros of $f$ that converges to the point $\zeta$. 

Suppose $\lim_{\mathscr{A}_{h_1, h_2}(\zeta) \ni z \to \zeta}f(z)$ exists, then so does  $\lim_{\mathscr{A}_{h_1, h_2}(\zeta) \ni z \to \zeta}|f(z)|$.

By evaluating $f$ along that sequence of zeros we have 
\begin{displaymath}
\liminf_{\mathscr{A}_{h_1, h_2}(\zeta) \ni z \to \zeta}|f(z)|=0,
\end{displaymath}
and therefore
\begin{displaymath}
\lim_{\mathscr{A}_{h_1, h_2}(\zeta) \ni z \to \zeta}|f(z)|=\limsup_{\mathscr{A}_{h_1, h_2}(\zeta) \ni z \to \zeta}|f(z)|=\liminf_{\mathscr{A}_{h_1, h_2}(\zeta) \ni z \to \zeta}|f(z)|=0,
\end{displaymath}
which means $|f|$, being a subharmonic function, is identically zero, because it can reach its supremum in an interior point. So we reach a contradiction, and therefore we know that 
\begin{displaymath}
\lim_{\mathscr{A}_{h_1, h_2}(\zeta) \ni z \to \zeta}f(z) 
\end{displaymath}
does not exist.

To summarize, we have the theorem
\begin{theorem}
\label{main}
On the boundary of $\Omega$ there exists a neighborhood $U$ of $(1,0)$, and there exists a bounded holomorphic function $f$, such that for any point $\zeta \in U$, the limit
\begin{displaymath}
\lim_{\mathscr{A}_{h_1, h_2}(\zeta) \ni z \to \zeta}f(z) 
\end{displaymath}
does not exist.
\end{theorem}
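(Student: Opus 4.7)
The plan is to package the three preceding lemmas into a single bounded holomorphic function $f$ on $\Omega$ whose zero set accumulates at every $\zeta \in U$ from inside $\mathscr{A}_{h_1, h_2}(\zeta)$. Once such an $f$ is at hand, the conclusion is cheap: any hypothetical $\mathscr{A}_{h_1, h_2}(\zeta)$-limit of $f$ must equal the limit along the distinguished sequence of zeros, namely $0$; but $\log|f|$ is subharmonic and not identically $-\infty$, so $|f|$ cannot vanish in the limit along approach regions at every point of the open set $U$ without forcing $f \equiv 0$, contradicting the non-triviality of the construction.

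For the construction, I would extract a subsequence $\{\varepsilon_{n_k}\}\subset\{n^{-1/4}\}$ with $\sum_{k}\varepsilon_{n_k}<\infty$, select the radii $r_{n_k}$ supplied by the fourth lemma, and set
\[
f(z)=\prod_{k=1}^{\infty}\frac{f_{n_k}(z)}{1-(1-\varepsilon_{n_k})\,g_{n_k, r_{n_k}}(z)}.
\]
The second lemma gives $|g_{n,r}|\leqslant 1+A/n$, which keeps each denominator bounded away from $0$ for large $k$, so every factor is holomorphic and of sup-norm at most $1$. Each factor differs from $1$ by a quantity of order $\varepsilon_{n_k}$, so summability of $\{\varepsilon_{n_k}\}$ yields locally uniform convergence in $\Omega$ to a non-trivial bounded holomorphic $f$.

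Fix now $\zeta\in U$. For each $k$ the first lemma provides an index $j(k)$ with $\zeta\in V_{r_{n_k}}(\zeta_{j(k)})$, and the fourth lemma then delivers a zero $w_k$ of $f_{n_k}$, hence of $f$, inside $\mathscr{A}_{h_1, h_2}(\zeta)$. As $k\to\infty$ the radii $r_{n_k}$ shrink to $0$, the sets $V_{r_{n_k}}(\zeta_{j(k)})$ collapse onto $\zeta$, and therefore $w_k\to\zeta$, yielding the required sequence of zeros of $f$ in $\mathscr{A}_{h_1, h_2}(\zeta)$. Combined with the subharmonicity argument of the first paragraph, this gives the non-existence of the admissible limit at every $\zeta\in U$.

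The main obstacle is the uniformity in $\zeta$: the choice of $r_{n_k}$ must depend only on $n_k$, not on the base point, so that a single product $f$ handles every $\zeta\in U$ simultaneously; this is precisely what the fourth lemma grants. A subsidiary worry is that the denominators $1-(1-\varepsilon_{n_k})g_{n_k, r_{n_k}}$ might themselves vanish and cancel the zeros of $f_{n_k}$, but the estimate $|g_{n_k, r_{n_k}}|\leqslant 1+A/n_k$ keeps the denominators at least of size $\varepsilon_{n_k}-O(1/n_k)>0$ for large $k$, so the $w_k$ survive as genuine zeros of $f$.
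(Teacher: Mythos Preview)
Your proposal is correct and follows essentially the same route as the paper: the same infinite product
\[
f(z)=\prod_{k}\frac{f_{n_k}(z)}{1-(1-\varepsilon_{n_k})\,g_{n_k,r_{n_k}}(z)},
\]
the same use of the covering lemma together with the zero-location lemma to place a zero of $f$ inside every $\mathscr{A}_{h_1,h_2}(\zeta)$ arbitrarily close to $\zeta$, and the same subharmonicity/maximum-principle contradiction to exclude a vanishing admissible limit. If anything, your version is more explicit than the paper's about why the product converges and why the denominators do not cancel the zeros $w_k$.
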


Since this $U$ is of positive measure, we immediately have
\begin{corollary}
\label{main2}
There is no Fatou's theorem for these broader approach regions $\mathscr{A}_{h_1, h_2}$.
\end{corollary}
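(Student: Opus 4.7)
The corollary is essentially an immediate consequence of Theorem~\ref{main} once we pin down what ``Fatou's theorem'' means in this setting. The plan is to first state explicitly the analogue of Fatou's theorem that one might hope to hold, then show directly that Theorem~\ref{main} refutes it.

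The precise statement to contradict is: for every bounded holomorphic function $f$ on $\Omega$, the $\mathscr{A}_{h_1,h_2}$-admissible limit $\lim_{\mathscr{A}_{h_1,h_2}(\zeta)\ni z\to\zeta}f(z)$ exists for almost every $\zeta\in\partial\Omega$ with respect to the surface measure. The contradiction proceeds in two short steps. First, invoke Theorem~\ref{main} to produce the neighborhood $U\subset\partial\Omega$ of $(1,0)$ and the specific bounded holomorphic function $f$ on $\Omega$ for which the admissible limit fails at \emph{every} point of $U$. Second, observe that $U$ is an open subset of the smooth hypersurface $\partial\Omega$, hence it carries strictly positive surface measure. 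The set of points where the $\mathscr{A}_{h_1,h_2}$-limit of this single $f$ fails to exist therefore has positive measure, which is incompatible with any ``almost everywhere'' Fatou-type assertion.

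The only thing that might look like an obstacle is a subtlety about how ``Fatou's theorem'' is formulated: one could imagine a weaker statement, e.g.\ that some set of full measure of boundary points admits limits for \emph{some} bounded holomorphic functions. The way to rule this out is to emphasize that even the existence of a single bounded holomorphic function that fails on a set of positive measure is already a genuine obstruction, since a true Fatou theorem must hold uniformly in the class of bounded holomorphic functions. Thus the corollary is just the contrapositive of the desired Fatou theorem against the explicit counterexample from Theorem~\ref{main}, with no additional computation needed.
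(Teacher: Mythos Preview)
Your argument is correct and matches the paper's own reasoning exactly: the paper simply remarks that the neighborhood $U$ from Theorem~\ref{main} has positive surface measure, so the single bounded holomorphic function $f$ produced there already violates any almost-everywhere Fatou-type statement for the regions $\mathscr{A}_{h_1,h_2}$. Your additional clarification of what ``Fatou's theorem'' should mean here is a helpful expansion, but the logical content is identical.
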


\bibliographystyle{amsplain}

\end{document}